\newtheorem{lem}{Lemma}
\newcommand{\R}{{\mathbb{R}}}
\newcommand{\C}{{\mathbb{C}}}
\begin{document}
\title[Littlewood's proof of the FTA: a simpler version]{Littlewood's proof of the fundamental theorem of algebra: a simpler version}

\author{A. Bauval}

\begin{abstract} We present an elementary proof of the fundamental theorem of algebra, following Cauchy's version but avoiding his use of circular functions. It is written in the same spirit as Littlewood's proof of 1941, but reduces it to more elementary and constructive arguments.
\end{abstract}
\maketitle

\section{Introduction} Among the many ways to prove the FTA, which says that every nonconstant complex polynomial $P$ has at least one complex root, the most commonly taught since Cauchy \cite[pp. 331-339]{Cauchy} consists in showing that $|P|$ has a global minimum and that its value at any local minimum point has to be zero.

Neither of these two steps is completely elementary: the first one, after the straightforward remark that  $|P(z)|$ tends to infinity with $|z|$, uses -- in its modern formulation -- the Heine-Borel characterization of compact subspaces of the complex plane and the behaviour of continuous functions on such spaces; the second step {\sl seems to} involve the fact that any complex number has roots of arbitrary order $k$, which is obvious {\sl if} the use of circular functions is allowed.

In 1941, Littlewood \cite{Littlewood} recalled that this use of De Moivre's formula can be replaced by a less theoretically demanding algorithm \cite{Weber}, combining the case $k=2$ and the case where $k$ is odd: the former only relies on the existence of real square roots for nonnegative real numbers and the latter uses moreover the existence of a real zero for any real polynomial of odd degree. Since the latter property is, in turn, a corollary of the FTA, he found interesting to build two simple proofs of the FTA, relying only on the existence of square roots and on the minimum argument: one proof uses that argument again, and the other one replaces this repetition by an inductive reasoning.

More directly inspired by Cauchy's proof -- an analytic transcription of Argand's geometric one \cite{Argand} -- we shall get rid of both repetition and induction, and escape the use of square roots as well. Of course, there is no theoretical interest in this whole story, since the extreme value theorem is equivalent to the upper bound property \cite{Propp}, which characterizes $\R$ (among ordered fields), hence entails real-closedeness. Littlewood's sole aim was to give a cheaper version of the ``pedestrian'' proof of the FTA, and so is ours. Incidentally, we also replace the modern compactness arguments by the less sophisticated Bolzano theorem for real sequences.

Meanwhile writing this paper, we discovered that a similar one \cite{Oliveira} was published two years ago. We defer to it for more context and bibliography, and hope our definitely simpler version to show up worthwhile.

\section{Cauchy's proof revisited}

Let us first rephrase the three steps of Cauchy's proof in terms which avoid square roots -- hence only talk about $|z|^2=x^2+y^2$ instead of $|z|=\sqrt{x^2+y^2}$, unless $z$ is real. Using only Bolzano's theorem for real sequences -- which had just been proved in 1817 -- we shall also make explicit, as elementarily and faithfully as possible, Cauchy's informal justification of the existence of a minimum for the (square of the) absolute value of a complex polynomial of degree $n>0$,$$P(z)=a_0+a_1z+\ldots+a_nz^n.$$

\begin{lem}$\lim_{|z|^2\to+\infty}|P(z)|^2=+\infty.$
\end{lem}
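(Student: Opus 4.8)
The plan is to isolate the leading monomial. For $z\neq0$ write
$$P(z)=z^{n}\bigl(a_{n}+E(z)\bigr),\qquad E(z)=\sum_{k=1}^{n}a_{n-k}\,z^{-k},$$
so that, using $|z^{n}|^{2}=(|z|^{2})^{n}$,
$$|P(z)|^{2}=(|z|^{2})^{n}\,|a_{n}+E(z)|^{2}.$$
Since only $z$ with $|z|^{2}$ large matter (in particular $z\neq0$, so the division makes sense), it will suffice to show that $|a_{n}+E(z)|^{2}$ stays bounded below by a fixed positive constant once $|z|^{2}$ is large, and then to invoke $(|z|^{2})^{n}\ge|z|^{2}$ for $|z|^{2}\ge1$ (recall $n\ge1$ and $a_{n}\neq0$, as $P$ has degree $n$).

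First I would estimate the ``error'' $E(z)$. By the Cauchy--Schwarz inequality,
$$|E(z)|^{2}\le\Bigl(\sum_{k=1}^{n}|a_{n-k}|^{2}\Bigr)\Bigl(\sum_{k=1}^{n}(|z|^{2})^{-k}\Bigr),$$
and for $|z|^{2}\ge1$ each term of the second sum is at most $(|z|^{2})^{-1}$, so $|E(z)|^{2}\le C/|z|^{2}$ with $C:=n\sum_{j=0}^{n-1}|a_{j}|^{2}$ depending only on $P$. The point of invoking Cauchy--Schwarz rather than a term-by-term bound is that this estimate never mentions $|z|$ itself, only the polynomial quantities $(|z|^{2})^{k}$: this is where the ``no square roots'' discipline bites, since a direct bound $|P(z)|\ge|a_{n}||z|^{n}-\sum_{k<n}|a_{k}||z|^{k}$ would reintroduce the odd powers $|z|^{k}=\bigl(\sqrt{|z|^{2}}\bigr)^{k}$.

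Once $|z|^{2}\ge 4C/|a_{n}|^{2}$ we get $|E(z)|^{2}\le|a_{n}|^{2}/4$, hence $|E(z)|\le|a_{n}|/2$, hence by the triangle inequality $|a_{n}+E(z)|\ge|a_{n}|-|E(z)|\ge|a_{n}|/2\ge0$; squaring this last inequality --- legitimate precisely because both sides have been made nonnegative --- gives $|a_{n}+E(z)|^{2}\ge|a_{n}|^{2}/4$. Plugging back, for every $z$ with $|z|^{2}\ge\max\bigl(1,\,4C/|a_{n}|^{2}\bigr)$ we obtain
$$|P(z)|^{2}\ge\frac{|a_{n}|^{2}}{4}\,(|z|^{2})^{n}\ge\frac{|a_{n}|^{2}}{4}\,|z|^{2},$$
so given any bound $M$ it is enough to require moreover $|z|^{2}\ge 4M/|a_{n}|^{2}$.

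I do not expect a genuine obstacle: the statement merely records that a polynomial of degree $n$ grows like its leading term, and the only thing needing care --- in keeping with the paper's ground rules --- is the bookkeeping that keeps the argument square-root-free, namely the identity $|z^{k}|^{2}=(|z|^{2})^{k}$, the Cauchy--Schwarz bound on $|E(z)|^{2}$, and the squaring of the reverse triangle inequality only after its right-hand side has been shown to be $\ge0$.
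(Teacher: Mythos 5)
Your proof is correct, and it even spells out the final ``given $M$, take $|z|^2\ge\ldots$'' step more explicitly than the paper does, but it takes a genuinely different route. The paper applies Cauchy--Schwarz a single time to the identity $a_nz^n=P(z)-\sum_{k=0}^{n-1}a_kz^k$, viewed as a sum of $n+1$ terms, getting $|a_nz^n|^2\le(n+1)\bigl(|P(z)|^2+\sum_{k<n}|a_kz^k|^2\bigr)$ and hence directly $|P(z)|^2\ge|z|^{2n}\bigl(\frac{|a_n|^2}{n+1}-\sum_{k<n}\frac{|a_k|^2}{|z|^{2(n-k)}}\bigr)$; the trick of making $P(z)$ itself one of the summands means no triangle inequality and no unsquared modulus of a non-real quantity ever appears. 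You instead factor out the leading monomial, control the tail $E(z)$ by Cauchy--Schwarz, and then pass through the reverse triangle inequality $|a_n+E(z)|\ge|a_n|-|E(z)|$. That step is where your claim of square-root-freeness is not quite accurate: $|E(z)|$, $|a_n|$ and $|a_n+E(z)|$ are themselves square roots of sums of squares (the paper's ground rule is to mention only $|w|^2$ when $w$ is not real), so your intermediate inequalities do invoke the real square root even though your hypotheses and conclusion do not. This is harmless for the truth of the lemma and easy to repair inside your own scheme --- expand $|a_n+E(z)|^2=|a_n|^2+2{\rm Re}\bigl(a_n\overline{E(z)}\bigr)+|E(z)|^2$ and bound $2\bigl|{\rm Re}\bigl(a_n\overline{E(z)}\bigr)\bigr|\le\frac12|a_n|^2+2|E(z)|^2$ (a purely real AM--GM estimate on real and imaginary parts), which yields $|a_n+E(z)|^2\ge\frac12|a_n|^2-|E(z)|^2\ge\frac14|a_n|^2$ under your same largeness condition --- but the paper's one-shot Cauchy--Schwarz buys exactly this economy for free, at the cost of the slightly worse constant $\frac{|a_n|^2}{n+1}$.
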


\begin{proof}By the Cauchy-Schwarz inequality in $\C^{n+1}$,
$$|a_nz^n|^2=\left|P(z)-\sum_{k=0}^{n-1}a_kz^k\right|^2\le(n+1)\left(|P(z)|^2+\sum_{k=0}^{n-1}|a_kz^k|^2\right),$$
hence
$$|P(z)|^2\ge|z|^{2n}\left(\frac{|a_n|^2}{n+1}-\sum_{k=0}^{n-1}\frac{|a_k|^2}{|z|^{2(n-k)}}\right).$$
\end{proof}

\begin{lem}The function $|P|^2$ has a global minimum.
\end{lem}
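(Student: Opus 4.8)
The plan is to let $m$ denote the infimum of $|P|^2$ over $\C$ — a well-defined real number, since the set of values is nonempty and bounded below by $0$ — and then to exhibit a point at which this infimum is attained, by extracting a convergent subsequence from a minimizing sequence.

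First I would fix a \emph{minimizing sequence} $(z_j)_{j\ge1}$ in $\C$, that is, one with $|P(z_j)|^2\to m$. Applying Lemma~1 with threshold $m+1$, there is some $R$ such that $|P(z)|^2>m+1$ whenever $|z|^2>R$; since $|P(z_j)|^2<m+1$ for all large $j$, the sequence $(z_j)$ eventually lies in the disk $\{\,z:|z|^2\le R\,\}$, hence is bounded. Writing $z_j=x_j+iy_j$ with $x_j,y_j\in\R$, both real sequences $(x_j)$ and $(y_j)$ are bounded, so Bolzano's theorem yields a subsequence along which $(x_j)$ converges, and then a further subsequence of that one along which $(y_j)$ converges as well; call the common limit $z_\infty=x_\infty+iy_\infty$.

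It then remains to check that $|P(z_\infty)|^2=m$. Because $|P(x+iy)|^2$ is a polynomial in the two real variables $x$ and $y$ — a finite sum of products of the coordinate functions, each of which is continuous — it is a continuous function of $(x,y)$, so along the extracted subsequence $|P(z_j)|^2\to|P(z_\infty)|^2$. Since that same subsequence still satisfies $|P(z_j)|^2\to m$, we conclude $|P(z_\infty)|^2=m$, so $z_\infty$ is the desired global minimum point.

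The main obstacle is really only bookkeeping: reducing the two-dimensional Bolzano--Weierstrass statement to two successive applications of the one-dimensional Bolzano theorem for real sequences — the sole tool the paper allows itself — and recording the elementary-but-needed fact that $|P|^2$ is continuous as a real polynomial in $x$ and $y$. No square roots and no compactness machinery enter. One could alternatively extract the subsequence by a bisection argument on nested closed squares, but the two-step Bolzano route is shorter once Bolzano's theorem is taken as given.
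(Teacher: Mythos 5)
Your proposal is correct and follows essentially the same route as the paper: a minimizing sequence is bounded by Lemma~1, two successive applications of Bolzano's theorem extract a convergent subsequence of real and imaginary parts, and continuity of $|P|^2$ as a real polynomial in two variables finishes the argument. No gaps.
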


\begin{proof}Let $m$ be the greatest lower bound of its values, and $(z_k)$ a sequence such that $|P(z_k)|^2\to m$. By the previous lemma, $(|z_k|^2)$ is bounded, hence so are the sequences of real and imaginary parts $x_k,y_k$ of $z_k$. By Bolzano's theorem,  $(x_k)$ has a convergence subsequence, $x_{\varphi(k)}\to x$, and $(y_{\varphi(k)})$, in turn, has a convergent subsequence, $y_{\psi(\varphi(k))}\to y$. By continuity of $|P|^2$ (viewed as a polynomial function of two real variables, with real coefficients), $|P(x+{\rm i}y)|^2=m$.
\end{proof}

\begin{lem}A point where $|P|^2$ does not vanish cannot be a local minimum.
\end{lem}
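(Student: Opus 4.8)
The plan is to prove the statement in constructive form: if $z_0$ satisfies $|P(z_0)|^2=m>0$, then every neighbourhood of $z_0$ contains a point where $|P|^2<m$, so $z_0$ is not a local minimum. Expand $P$ about $z_0$: since $P$ is nonconstant there is a least index $k$ with $1\le k\le n$ and $P^{(k)}(z_0)\neq0$; writing $b_0=P(z_0)\neq0$, $b_k=P^{(k)}(z_0)/k!\neq0$ and $b_1=\dots=b_{k-1}=0$, we have $P(z_0+h)=\sum_{j=0}^n b_j h^j$. Along a ray $h=tw$ ($t\in\R$, $w\neq0$ fixed), $t\mapsto|P(z_0+tw)|^2$ is a real polynomial in $t$ with constant term $m$, with vanishing coefficients of $t,\dots,t^{k-1}$, and with coefficient of $t^k$ equal to $b_0\overline{b_kw^k}+\overline{b_0}b_kw^k=2\,\mathrm{Re}(Cw^k)$, where $C:=\overline{b_0}b_k\neq0$. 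Hence it suffices to produce one $w$ with $\mathrm{Re}(Cw^k)<0$: for small $t>0$ one then gets $|P(z_0+tw)|^2=m+t^k\bigl(2\,\mathrm{Re}(Cw^k)+O(t)\bigr)<m$. So the lemma reduces to the assertion that the polynomial map $w\mapsto\mathrm{Re}(Cw^k)$ takes a negative value.

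If $k$ is odd this is immediate, since as $w$ runs through $1,-1,\mathrm{i},-\mathrm{i}$ the number $\mathrm{Re}(Cw^k)$ runs through $\pm\mathrm{Re}(C)$ and $\pm\mathrm{Im}(C)$ (using $\mathrm{i}^k=\pm\mathrm{i}$), one of which is negative because $C\neq0$. For $k$ even: $w=1$ settles the case $\mathrm{Re}(C)<0$; if $\mathrm{Re}(C)=0$, say $C=\mathrm{i}\beta$ with $\beta\neq0$, then $\mathrm{Re}(Cw^k)=-\beta\,\mathrm{Im}(w^k)$, and it is enough to find $w$ with $\mathrm{Im}(w^k)\neq0$ — e.g.\ $w=1+\mathrm{i}s$ for a suitable real $s$, the polynomial $s\mapsto\mathrm{Im}\bigl((1+\mathrm{i}s)^k\bigr)$ being nonzero — then switch $w$ to $\bar w$ if the sign is wrong. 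The substantive case is $k$ even with $\mathrm{Re}(C)>0$, and here I would argue structurally. The set $S=\{w^k:w\in\C\}$ is closed under multiplication and contains $1$, hence contains every power of $\zeta:=(2+\mathrm{i})^k$; and $(2+\mathrm{i})^m$ is never real for $m\ge1$ (its imaginary part modulo $5$ cycles through $1$ and $4$), so $\zeta$ has argument an irrational multiple of $\pi$. Consequently the rays from $0$ through the points $\zeta^j$ are dense among all rays (pigeonhole), so the progression $\{\zeta^j\}$, hence $S$, cannot avoid the open half-plane $\{\zeta':\mathrm{Re}(C\zeta')<0\}$; therefore $\mathrm{Re}(Cw^k)<0$ for some $w$. (An alternative, perhaps closer to the paper's analytic spirit, is to stay on the line $w=1+\mathrm{i}s$ and force $\mathrm{Re}\bigl(C(1+\mathrm{i}s)^k\bigr)$ below $0$ for $k\ge2$ by an intermediate-value argument on it and its imaginary companion.)

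The step I expect to be the genuine obstacle is precisely this even case: constructing a descent direction $w$ without ever extracting a $k$-th root — or even a square root — of a complex number, and without recourse to circular functions. That is the delicate heart of the paper; whatever short, elementary device the author uses there is, I expect, the real contribution, the rest of the lemma being routine.
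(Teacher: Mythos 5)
Your first paragraph is exactly the paper's own proof of this lemma: expand at $z_0$ with least index $k$, observe that $t\mapsto|P(z_0+tw)|^2$ has constant term $|P(z_0)|^2$, no terms in $t,\dots,t^{k-1}$, and coefficient $2\,\mathrm{Re}(Cw^k)$ in degree $k$ (your $C=\overline{b_0}b_k$ is the paper's $b\overline a$), and descend along $t>0$ small once a direction $w$ with $\mathrm{Re}(Cw^k)<0$ is available. Where you genuinely diverge is in supplying that direction, which the paper deliberately postpones to its Section 3 (Lemmas 4 and 5). Your easy cases ($k$ odd via $w\in\{1,-1,\mathrm{i},-\mathrm{i}\}$; $\mathrm{Re}(C)\le0$ via $w=1$ or a nonvanishing polynomial plus conjugation) are correct. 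For the hard case ($k$ even, $\mathrm{Re}(C)>0$) your argument -- all powers of $\zeta=(2+\mathrm{i})^k$ are $k$-th powers, $(2+\mathrm{i})^m$ is never real by the mod-$5$ computation, hence $\arg\zeta$ is an irrational multiple of $\pi$ and the directions of the $\zeta^j$ are dense, so some $\zeta^j$ lands in the half-plane $\mathrm{Re}(C\,\cdot)<0$ -- is mathematically sound, but it is a different and much heavier route: it rests on the polar angle and on the density of an irrational rotation orbit (Kronecker/pigeonhole, only sketched), i.e.\ precisely the circular-function apparatus (plus the normalizations $\zeta^j/|\zeta^j|$, hence square roots) that the paper is written to avoid, and it yields no explicit direction nor bound on $j$. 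The paper instead proves the explicit inequality $\mathrm{Re}\bigl((1+3\mathrm{i}/k)^k\bigr)<0$ for every $k\ge2$ by pairing terms of the binomial expansion, and then chooses $h=1$, $h=1\pm3\mathrm{i}/k$ or $h=1\pm3\mathrm{i}/(2k)$ according to the sign of $\mathrm{Re}(c)$, taking whichever conjugate makes the imaginary contribution harmless; this is fully explicit, elementary, and handles all $k\ge2$ uniformly. In short: same proof of the lemma proper, but for the key existence of the descent direction you trade the paper's one-line explicit construction (which you correctly anticipated in your parenthetical remark about the line $1+\mathrm{i}s$, without carrying it out) for a valid but non-elementary equidistribution argument that defeats the stated purpose of the paper.
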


\begin{proof}For such a point $z$, there exists an integer $k>0$ and a complex polynomial $Q$ such that
$$P(z+h)=a+h^kQ(h),\quad a=P(z)\ne0,\quad b=Q(0)\ne0.$$
Fix some $h\in\C$ such that  ${\rm Re}(h^kb\overline a)<0$, and consider the real polynomial
$$|P(z+th)|^2=\left(a+t^kh^kQ(th)\right)\left(\overline a+t^k\overline{h^kQ(th)}\right)=b_0+b_kt^k+\ldots+b_{2n}t^{2n},$$
where $b_0=a\overline a$, and $b_k=2{\rm Re}(h^kb\overline a)<0$. Then, for any $t>0$ small enough (more precisely: $t\le1$ and $t<-b_k/\sum_{j>k}|b_j|$),
$$|P(z+th)|^2-|P(z)|^2=t^k\sum_{j\ge k}b_jt^{j-k}\le t^k\left(b_k+t\sum_{j>k}|b_j|\right)<0.$$
\end{proof}

When $k=1$, the existence of an $h$ like in the last proof is obvious. Cauchy used circular functions to justify the existence of such an $h$, even when $k>1$. The next section gives a more elementarily construction.

\section{Two constructive key lemmas}

The next lemma pretends to ignore the following inequality, but shows that it happens earlier than expected:
$$\lim_{k\to+\infty}{\rm Re}\left[(1+3{\rm i}/k)^k\right]=\cos 3<0.$$

\begin{lem}For any integer $k\ge2$, let $z_k=1+3{\rm i}/k.$ Then, ${\rm Re}(z_k^k)<0$.
\end{lem}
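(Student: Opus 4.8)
The plan is to control $\operatorname{Re}(z_k^k)$ and $\operatorname{Im}(z_k^k)$ directly from the binomial expansion of $(1+3\mathrm{i}/k)^k$, exploiting the smallness of $3/k$ so that a few leading terms dominate. Write $z_k^k=\sum_{j=0}^k\binom{k}{j}(3\mathrm{i}/k)^j$. The real part collects the even $j$ with signs $(-1)^{j/2}$, and the imaginary part the odd $j$ with signs $(-1)^{(j-1)/2}$. For $j=0,2$ the real-part contributions are $1$ and $-\binom{k}{2}(3/k)^2=-\tfrac{9}{2}(1-1/k)$, so already the $j=0,2$ terms sum to roughly $1-\tfrac92=-\tfrac72$, comfortably negative; everything then hinges on showing the remaining terms $j\ge4$ cannot push this back up above $0$.

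First I would record the basic estimate $\binom{k}{j}(3/k)^j=\frac{3^j}{j!}\prod_{i=0}^{j-1}(1-i/k)\le\frac{3^j}{j!}$, valid for all $j$ and all $k\ge2$. Summing the tail of the (absolute values of the) real-part series gives $\bigl|\sum_{j\ge4,\ j\text{ even}}(\pm)\binom{k}{j}(3/k)^j\bigr|\le\sum_{j\ge4}\frac{3^j}{j!}=e^3-1-3-\tfrac92-\tfrac92=e^3-16<e^3-16\approx 4.09$. Hmm — that bound alone is not quite small enough to beat the $-\tfrac72$ from the first two even terms, so I would sharpen it, either by keeping the $j=4,5$ terms explicitly (they contribute $+\tfrac{27}{8}\prod(1-i/k)$ and $-\tfrac{81}{40}\mathrm{i}\cdots$ to the respective parts) and only tail-bounding $j\ge6$ by $\sum_{j\ge6}3^j/j!=e^3-16-\tfrac{81}{8}\approx$ a much smaller number, or by treating small $k$ ($k=2,3,4,\dots$) by hand and using the asymptotic $\operatorname{Re}(z_k^k)\to\cos 3$ argument with explicit error control for large $k$. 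In fact the cleanest route is: for $k$ large, $z_k^k=\exp\!\bigl(k\log(1+3\mathrm{i}/k)\bigr)$ and $k\log(1+3\mathrm{i}/k)=3\mathrm{i}-\tfrac{9}{2k}+O(1/k^2)$ has positive-or-bounded real part tending to $0^-$, while the argument tends to $3$, so $\operatorname{Re}(z_k^k)\to e^0\cos 3=\cos 3<0$ with a quantitative remainder; combine this with a direct finite check for the finitely many small $k$ not covered by the asymptotic estimate.

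The main obstacle is purely quantitative: the negative bulk $-\tfrac92(1-1/k)$ from the $j=2$ term is only a constant factor larger than the crude tail bound $e^3-16$, so a completely naive triangle-inequality argument fails by a hair and one must either carry one or two more binomial terms explicitly or split into a "small $k$ / large $k$" case analysis. I expect the author's version keeps the expansion through $j=2$ (or $j=3,4,5$) and bounds the remainder by $\sum_{j\ge4}3^j/j!$ or $\sum_{j\ge6}3^j/j!$ after noting that, by Cauchy–Schwarz or by the monotonicity $\prod(1-i/k)\le 1$, the $k$-dependence only helps; the payoff is that the whole thing reduces to comparing two explicit numerical constants, with no analysis beyond the binomial theorem and the power series of $e^3$.
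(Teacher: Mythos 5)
Your starting point (binomial expansion, with the $j=2$ term supplying the negativity) is the same as the paper's, but the quantitative fixes you propose do not close the gap. The crude tail bound $\sum 3^j/j!$ is too large at every truncation level you suggest: keeping $j=0,2,4$ explicitly, the retained real part tends to $1-\tfrac92+\tfrac{27}8=-\tfrac18$ as $k\to\infty$, while even the even-index tail bound $\sum_{j\ge6,\,j\text{ even}}3^j/j!=\cosh 3-\tfrac{71}8\approx1.19$ (your $e^3-16-\tfrac{81}8$ is an arithmetic slip, as is $1+3+\tfrac92+\tfrac92=16$), so the triangle inequality still fails by a wide margin. Worse, your key claim that ``the $k$-dependence only helps'' is false: the factors $\prod_{i<j}(1-i/k)\le1$ also shrink the \emph{negative} terms --- at $k=2$ the $j=2$ term is only $-\tfrac94$, not $-\tfrac92$ --- so a bound uniform in $k$ is not automatic, and the small-$k$ cases only survive because the higher binomial coefficients vanish, which your argument never uses. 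Your fallback route via $k\log(1+3\mathrm i/k)\to 3\mathrm i$ and $\cos 3<0$ is not an option here: it reintroduces $\exp$, $\log$ and the circular functions, i.e.\ exactly the machinery this lemma exists to avoid, and the ``finite check for small $k$'' is unspecified without an explicit remainder bound.

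The paper's proof avoids all numerical constants such as $e^3$ or $\cos 3$ by exploiting the alternating structure: writing ${\rm Re}(z_k^k)=x_0-x_1+x_2-x_3+\cdots$ with $x_j=\binom{k}{2j}(3/k)^{2j}$, it groups the terms after $x_0$ into consecutive pairs and shows $-x_{2p+1}+x_{2p+2}\le0$ for every $p$, because the exact ratio $x_{2p+2}/x_{2p+1}=9(k-4p-2)(k-4p-3)/\bigl((4p+4)(4p+3)k^2\bigr)$ is at most $\tfrac9{12}<1$; this handles the $k$-dependence exactly (including the vanishing binomials) rather than discarding it. A slightly sharper estimate on the first pair, $-x_1+x_2<-1$ for all $k\ge2$, then gives ${\rm Re}(z_k^k)<x_0-1=0$ uniformly in $k$, with nothing beyond the binomial theorem. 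If you want to salvage your approach, you must at least keep the expansion through $j=6$, prove a uniform-in-$k$ upper bound on the retained partial sum, and treat the $k$ for which binomial coefficients vanish separately --- at which point the paper's pairing argument is both shorter and cleaner.
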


\begin{proof}The binomial formula gives a finite alternate sum
$${\rm Re}(z_k^k)=x_0-x_1+x_2-x_3+\ldots\quad{\rm where}\quad x_j={k\choose 2j}(3/k)^{2j},$$
whith the comfortable convention that ${k\choose \ell}=0$ for any $\ell>k$.\\
The first term is $x_0=1$ and the following ones can be grouped by pairs. For any integer $p\ge0$,
$$-x_{2p+1}+x_{2p+2}={k\choose 4p+2}\left(\frac3k\right)^{4p+2}\left(-1+\frac{9(k-4p-2)(k-4p-3)}{(4+4p)(3+4p)k^2}\right)\le0.$$
A more accurate upper bound for the first pair ends the proof:
$$-x_1+x_2<\frac{4(k-1)}k\left(-1+\frac{3(k-2)(k-3)}{4k^2}\right)=-1-\frac{14k^2-33k+18}{8k^3}<-1.$$

\end{proof}

This lemma gives the following constructive argument to conlude the proof of the FTA, as claimed at the end of the previous section:

\begin{lem}For any complex number $c\ne0$ and any integer $k\ge2$, there exist an $h\in\C$ such that ${\rm Re}(h^kc)<0$.
\end{lem}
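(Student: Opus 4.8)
The plan is to build the required $h$ in two steps: first, using only the previous lemma (namely ${\rm Re}(z_k^k)<0$ with $z_k=1+3{\rm i}/k$), single out a \emph{nonzero} ``seed'' $h_0$ for which ${\rm Re}(h_0^kc)\le0$; then perturb $h_0$ a little, turning the ``$\le$'' into a strict inequality by the same trick used at the end of the last proof of the previous section — expand by the binomial formula and discard the higher-order terms.

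For the seed I would use the three numbers $1$, $z_k$, $\overline{z_k}$. Since $\overline{z_k}^{\,k}=\overline{z_k^k}$ we have $z_k^k+\overline{z_k}^{\,k}=2\,{\rm Re}(z_k^k)<0$, so with $\lambda=-1/\bigl(2\,{\rm Re}(z_k^k)\bigr)>0$ one gets $1+\lambda z_k^k+\lambda\,\overline{z_k}^{\,k}=0$; multiplying by $c$ and taking real parts,
$${\rm Re}(c)+\lambda\,{\rm Re}(z_k^kc)+\lambda\,{\rm Re}\bigl(\overline{z_k}^{\,k}c\bigr)=0 .$$
This is a combination with positive coefficients, so at least one of the reals ${\rm Re}(1^kc)$, ${\rm Re}(z_k^kc)$, ${\rm Re}(\overline{z_k}^{\,k}c)$ is $\le0$; let $h_0$ be one of $1,z_k,\overline{z_k}$ for which ${\rm Re}(h_0^kc)\le0$. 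It is nonzero.

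For the perturbation, set $v=-\overline{h_0^{\,k-1}c}$; since $h_0\ne0$ and $c\ne0$, $v\ne0$ and $v\,h_0^{\,k-1}c=-|h_0^{\,k-1}c|^2<0$. For real $t>0$ the binomial formula gives
$$(h_0+tv)^kc=h_0^kc+kt\,v\,h_0^{\,k-1}c+t^2c\sum_{j=2}^{k}{k\choose j}t^{\,j-2}v^jh_0^{\,k-j},$$
so, taking real parts and bounding the last sum for $t\le1$ by $N:=|c|\sum_{j=2}^{k}{k\choose j}|v|^j|h_0|^{\,k-j}>0$,
$${\rm Re}\bigl((h_0+tv)^kc\bigr)\le{\rm Re}(h_0^kc)-kt\,|h_0^{\,k-1}c|^2+t^2N\le t\bigl(tN-k\,|h_0^{\,k-1}c|^2\bigr),$$
which is negative as soon as $0<t<\min\bigl(1,\ k\,|h_0^{\,k-1}c|^2/N\bigr)$. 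Then $h=h_0+tv$ works.

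The one place where a choice has to be made is the seed step, and the perturbation is there precisely because one cannot take $k$-th roots, so ${\rm Re}(h_0^kc)$ need not already be strictly negative: one might hope the $k$-th powers of $1,z_k,\overline{z_k}$ never lie in a closed half-plane through $0$ — which would give ${\rm Re}(h_0^kc)<0$ outright — but this breaks down exactly when $z_k^k$ is a negative real, for then those three points are collinear and a purely imaginary $c$ beats them. (This case is in fact vacuous, but excluding it would cost a further estimate in the style of the previous lemma.) The perturbation step avoids the issue, at the cost only of the elementary estimate already performed once in the previous section, so the real content is just the positive-combination remark feeding on ${\rm Re}(z_k^k)<0$.
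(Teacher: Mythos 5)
Your proof is correct, but it follows a genuinely different route from the paper's. The paper argues by cases on $\alpha={\rm Re}(c)$ and gets the strict inequality in one shot: $h=1$ if $\alpha<0$; $h=z_k$ or $\overline{z_k}$ (whichever makes $\beta\,{\rm Im}(h^k)\ge0$) if $\alpha>0$, so that ${\rm Re}(h^kc)\le\alpha\,{\rm Re}(z_k^k)<0$; and if $\alpha=0$, $h=z_{2k}$ or $\overline{z_{2k}}$, the point being that ${\rm Im}(z_{2k}^k)\ne0$ because $\left({\rm Re}(z_{2k}^k)\right)^2-\left({\rm Im}(z_{2k}^k)\right)^2={\rm Re}(z_{2k}^{2k})<0$. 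You instead extract only a weak inequality from the identity $1+\lambda\bigl(z_k^k+\overline{z_k}^{\,k}\bigr)=0$ with $\lambda=-1/\bigl(2\,{\rm Re}(z_k^k)\bigr)>0$ — a positive combination forces ${\rm Re}(h_0^kc)\le0$ for some $h_0\in\{1,z_k,\overline{z_k}\}$ — and then make it strict by a first-order perturbation along $v=-\overline{h_0^{k-1}c}$, which works because $h_0\ne0$ makes the linear term $ktv\,h_0^{k-1}c=-kt\,|h_0^{k-1}c|^2$ dominate for small $t$; all the estimates check out (note $N>0$ since $v\ne0$). What each approach buys: yours needs only $z_k$, no case split on $c$, and no auxiliary point $z_{2k}$; the paper's is shorter and purely a finite sign-check, with no small-$t$ estimate — which matters stylistically, since this lemma is invoked inside the proof of Lemma~3, whose own argument is exactly such a small-$t$ estimate, so your version repeats that kind of work where the paper deliberately avoids it. Your closing speculation that the case ``$z_k^k$ a negative real'' is vacuous is unproved, but you correctly do not rely on it.
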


\begin{proof}Let $\alpha,\beta$ be the real and imaginary parts of $c$. According to the property of $z_k$ in the previous lemma, in order to get $\alpha{\rm Re}(h^k)-\beta{\rm Im}(h^k)<0$, just choose:
\begin{itemize}
\item if $\alpha<0$: $h=1$,
\item if  $\alpha>0$: $h=z_k$ or its conjugate (the one for which $\beta{\rm Im}(h^k)\ge0$),
\item if $\alpha=0$ (hence $\beta\ne0$): $h=z_{2k}$ or its conjugate, chosen similarily (note that ${\rm Im}(z_{2k}^k)\ne0$, since $\left({\rm Re}(z_{2k}^k)\right)^2-\left({\rm Im}(z_{2k}^k)\right)^2={\rm Re}(z_{2k}^{2k})<0$).
\end{itemize}
\end{proof}

\author{Anne Bauval}\\
\address{\small Institut de Math\'ematiques de Toulouse\\
\' Equipe \' Emile Picard, UMR 5580\\
Universit\'e Toulouse III\\
118 Route de Narbonne, 31400 Toulouse - France\\
bauval@math.univ-toulouse.fr}


\begin{thebibliography}{999}
\bibitem{Argand}J.-R. Argand, Philosophie math\'ematique. R\'eflexions sur la nouvelle th\'eorie des imaginaires, suivies d'une application \`a la d\'emonstration d'un th\'eor\`eme d'analise, {\sl Annales de math\'ematiques pures et appliqu\'ees} {\bf 5} (1814) 197-209.
\bibitem{Cauchy} A. L. Cauchy, {\sl Cours d'Analyse de l'\'Ecole Royale Polytechnique, Partie I (Analyse Alg\'ebrique).} 1821, chap. X.
\bibitem{Littlewood} J. E. Littlewood, Mathematical notes (14): Every polynomial has a root, {\sl J. London Math. Soc. (Ser. 1)} {\bf 16} no. 2 (1941) 95-98, \url{http://dx.doi.org/10.1112/jlms/s1-16.2.95}.
\bibitem{Oliveira}O. Rio Branco de Oliveira, The fundamental theorem of algebra: from the four basic operations, {\sl Amer. Math. Monthly} {\bf 119} no. 9 (2012) 753-758, \url{http://dx.doi.org/10.4169/amer.math.monthly.119.09.753}.
\bibitem{Propp}J. Propp, Real analysis in reverse, {\sl Amer. Math. Monthly} {\bf 120} no. 5 (2013) 392-408.
\bibitem{Weber}H. Weber, {\sl Lehrbuch der Algebra, vol. I.} 1898 (2nd ed.), \S 35.
\end{thebibliography}
\end{document}